\newtheorem{thm}{Theorem}
\newtheorem{lem}{Lemma}
\newtheorem{prop}{Proposition}
\newtheorem{cor}{Corollary}
\newtheorem{defn}{Definition}
\title{Almost cubes and fourth powers in short intervals}
\author{Tsz Ho Chan}
\date{}
\begin{document}
\maketitle
\begin{abstract}
In this paper, we study how short an interval $[x, x + x^\theta]$ contains an integer of the form $n_1 n_2 n_3$ and $m_1 m_2 m_3 m_4$ with $n_1 \approx n_2 \approx n_3$ and $m_1 \approx m_2 \approx m_3 \approx m_4$. The new idea is to adopt a second moment method (usually used for almost all results) to deduce a result for all short intervals.
\end{abstract}

\section{Introduction and main results}

In a series of work \cite{C2} - \cite{C7}, the author studied almost squares in short intervals. For the purpose of this paper, we say that a positive integer $n$ is an {\it almost square} if it can be factored as $n = n_1 n_2$ where $c \sqrt{n} \le n_1, n_2 \le C \sqrt{n}$ for some fixed constants $0 < c < 1 < C$. The best known result is that, for some constant $c_2 > 1$, the interval $[x, x + c_2 x^{1/4}]$ always contains an almost square for all sufficiently large $x$. It is based on the following elementary observation: If $m^2 < x \le (m + 1)^2$ for some integer $m \ge 0$, then
\[
x - m^2, (m+1)^2 - x < (m+1)^2 - m^2 = 2m + 1 \le 2 \sqrt{x} + 1.
\]
More specifically, for $x > 1$, one can choose
\[
a = \lceil \sqrt{x} \rceil \; \; \text{ and } \; \; b = \lfloor \sqrt{ a^2 - x } \rfloor.
\]
Then $0 \le a^2 - x \ll \sqrt{x}$ and $0 \le (a^2 - x) - b^2 \ll \sqrt{a^2 - x} \ll \sqrt[4]{x}$ which yields $n = n_1 n_2 := (a - b) (a + b) \in [x, x + c_2 x^{1/4}]$ as
\begin{equation} \label{2factor}
n_1 n_2 - x = (a - b) (a + b) - x = (a^2 - x) - b^2 \ll \sqrt[4]{x}.
\end{equation}

One can generalize the above concept of almost squares to almost cubes or almost $k$-th powers as follows. Given an integer $k \ge 2$, we say that a positive integer $n$ is an {\it almost $k$-th power} if it can be factored as
\[
n = n_1 n_2 \cdots n_k \; \; \text{ with } \; \; n_1, n_2,  \ldots, n_k \asymp
n^{1/k}.
\]
In this article, we are interested in studying how short an interval $[x, x + x^{\theta_k}]$ with $0 < \theta_k < 1$ contains an almost $k$-th power. Here and throughout the paper, we assume that $x > 1$ is sufficiently large. 

\bigskip

For $k = 3$, one can apply the above elementary method to show that any $\theta_3 > 1/2$ works. For example, choose $n_3 = \lceil x^{1/3} \rceil$. Then, $\frac{x}{n_3} \asymp x^{2/3}$ and one can find $n_1, n_2 \asymp x^{1/3}$ such that $0 \le n_1 n_2 - \frac{x}{n_3} \ll (\frac{x}{n_3})^{1/4} \ll x^{1/6}$ by \eqref{2factor}. Hence, $0 \le n_1 n_2 n_3 - x \ll x^{1/6} \cdot n_3 \ll x^{1/2}$.

\bigskip

Recently, in connection with elliptic curve cryptography, Islam \cite{Is} showed the existence of a number $n = n_1 n_2 n_3 \in [x, x + x^{1/2}]$ where $n_1, n_2, n_3 \asymp n^{1/3}$ are pairwise relatively prime. Moreover, Islam's argument allows one of the $n_1, n_2, n_3$ to be a prime number. So, one may ask if it is possible to restrict two or even all three of the factors to be prime numbers. This is then related to the study of almost primes $P_k$ (numbers with up to $k$ prime factors) and, more precisely, $E_k$ (numbers with exactly $k$ prime factors) in short intervals. Recently, Matom\"{a}ki and Ter\"{a}v\"{a}inen \cite{MT} proved that $[x, x + \sqrt{x} (\log x)^{1.55}]$ contains some $E_3$ numbers. However, in their result, one of the prime factors is very small with size $\asymp (\log x)^{1.1}$. So, it remains a challenge to find short intervals $[x, x + x^\theta]$ that contain $p_1 p_2 p_3$ with primes $p_1, p_2, p_3 \asymp x^{1/3}$.

\bigskip

Beside prime numbers, one can restrict the factors $n_1, n_2, n_3$ to other interesting arithmetic sequences or even random sequences. Another direction is to look at almost $k$-th powers with $k \ge 4$. Towards these, we can have some general and specific results. First, let us make the following definition.
\begin{defn}
We say that an infinite sequence of positive integers $\mathcal{A}$ is \emph{``almost dense"} if, for any $\epsilon > 0$, there exists a constant $c_{\epsilon, A} > 0$ such that
\begin{equation} \label{denseA}
\# \{ X \le n \le 2 X  \, : \, n \in \mathcal{A} \} \ge c_{\epsilon, \mathcal{A}} X^{1 - \epsilon / 4}
\end{equation}
for all sufficiently large $X$.
\end{defn}

\begin{thm} \label{thm1}
For any $\epsilon > 0$ and any two ``almost dense" sequences $\mathcal{A}_1$ and $\mathcal{A}_2$, the interval $[x, x + x^{5/9 + \epsilon}]$ contains an integer $n = m \cdot a_1 \cdot a_2$ for some $a_1 \in \mathcal{A}_1$, $a_2 \in \mathcal{A}_2$ and integer $m$ with $a_1, a_2, m \asymp x^{1/3}$ for all sufficiently large $x$.
\end{thm}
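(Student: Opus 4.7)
I would fix $X$ large, $\epsilon > 0$, set $H := X^{5/9+\epsilon}$, and study the count at a slightly shortened scale $L := H/2$:
\[
P(x) := \#\{(a_1, a_2, m) : a_i \in \mathcal{A}_i,\ a_1, a_2, m \asymp X^{1/3},\ m a_1 a_2 \in [x, x+L]\}.
\]
By almost-density of $\mathcal{A}_1, \mathcal{A}_2$ applied at scale $X^{1/3}$, the number of admissible triples is $T_3 \gg X^{1 - \epsilon/6}$, giving a mean
\[
\mathbb{E}_{x \in [X, 2X]}[P(x)] \; = \; \frac{L\, T_3}{X} \; \gg \; X^{5/9 + 5\epsilon/6},
\]
which is much larger than $1$.

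\textbf{Second moment.} I would expand
\[
\int_X^{2X} P(x)^2\, dx \; = \; \sum_{(T_1, T_2)} \max\bigl(0,\, L - |n_1 - n_2|\bigr),
\]
over ordered pairs of triples (with products $n_1, n_2$), separating the diagonal $T_1 = T_2$ (contributing $L T_3$) from the off-diagonal. For the off-diagonal, for each $k \neq 0$ with $|k| \le L$, I would bound $\#\{n_1 - n_2 = k\}$ by splitting on $d := \gcd(a_{1,1} a_{1,2},\, a_{2,1} a_{2,2})$ and applying the standard shifted-convolution estimate $\sum_{n \le X} d_3(n) d_3(n + k) \ll X (\log X)^{O(1)}$. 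Combined with the cancellation against the main term $(\int P)^2 / X = L^2 T_3^2 / X$, this should give $\mathrm{Var}(P) \ll \mathbb{E}[P] \cdot X^{o(1)}$, essentially Poisson-like.

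\textbf{Chebyshev plus interval-covering.} By Chebyshev, the exceptional set
\[
E \; := \; \{x \in [X, 2X] : P(x) = 0\} \;\subseteq\; \{x : |P(x) - \mathbb{E}[P]| \ge \mathbb{E}[P]/2\}
\]
has measure $|E| \ll X\,\mathrm{Var}(P)/\mathbb{E}[P]^2 \ll X^{4/9 + o(1)}$. The upgrade from ``almost all $x$'' to ``all $x$'' — the new idea highlighted in the abstract — is now almost automatic: having worked at the shortened scale $L = H/2$, there is slack $H - L = X^{5/9+\epsilon}/2$, which strictly exceeds $|E|$. Hence for every $x_0 \in [X, 2X]$, the subinterval $[x_0, x_0 + H/2]$ must contain some non-exceptional $x' \notin E$; at $x'$, $P(x') \ge 1$ yields an admissible triple with $m a_1 a_2 \in [x', x' + L] \subseteq [x_0, x_0 + H]$, which is the claim.

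\textbf{Main obstacle.} The crux is the off-diagonal variance bound in Step 2. Showing that the shifted-convolution sum $\sum_{0 < |k| \le L} \#\{n_1 - n_2 = k\}$ matches the random heuristic $L T_3^2 / X$ up to acceptable losses requires carefully exploiting both the size constraint $a_i \asymp X^{1/3}$ and the density of $\mathcal{A}_i$; the $\gcd$-splitting produces terms of the form $\sum_{d \mid k} d \cdot r(d)^2$ where $r(d)$ counts $(a_1, a_2) \in \mathcal{A}_1 \times \mathcal{A}_2$ with $d \mid a_1 a_2$, and one must handle these with only the weak density hypothesis. The exponent $5/9$ arises from balancing the mean $\mathbb{E}[P] \gg X^{\theta - \epsilon/6}$, the Chebyshev bound $|E| \ll X^{1 - \theta + o(1)}$, and the covering slack $H - L \asymp X^\theta$; sharper shifted-convolution estimates tailored to almost-dense sequences would be needed to push $\theta_3$ below $5/9$.
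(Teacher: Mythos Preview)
The proposal has a genuine gap in Step 2, and the claimed exponent balancing is inconsistent with your own argument.

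Your variance bound $\mathrm{Var}(P) \ll \mathbb{E}[P]\cdot X^{o(1)}$ is asserted but not proved, and cannot be obtained from the ingredients you list. The estimate $\sum_{n \le X} d_3(n)d_3(n+k) \ll X(\log X)^{O(1)}$ is only an \emph{upper bound}; it does not produce the asymptotic needed to cancel against $(\int P)^2/X$. Worse, once you majorize the $\mathcal{A}_i$-indicators by $1$ to reach a $d_3$-type sum, you throw away the density factors: the majorized second moment has size $\asymp L^2 X(\log X)^{O(1)}$, while $(\int P)^2/X \asymp L^2 T_3^2/X \ll L^2 X^{1-\epsilon/6}$, so there is no cancellation. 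The resulting Chebyshev bound becomes $|E| \ll X^{3+o(1)}/T_3^2 \gg X$, which is vacuous. For \emph{arbitrary} almost-dense $\mathcal{A}_1,\mathcal{A}_2$ you have no arithmetic handle on the off-diagonal beyond majorization, so a Poisson-scale variance is out of reach by this route.

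There is also an internal inconsistency: if your Poisson variance \emph{did} hold, your own covering numerology ($|E|\ll X^{1-\theta+o(1)}$ versus slack $\asymp X^\theta$) would yield the theorem for every $\theta>1/2$, not $5/9$. The exponent $5/9$ in the paper does not arise from a Chebyshev-plus-covering balance at all. The paper takes a second moment in a \emph{different} variable: it fixes $x$, expresses $\Phi(x/c)$ via Perron as a contour integral involving $\zeta(s)A(s)$ with $A$ supported on $\mathcal{A}_1$, and averages $|\Phi(x/c)-\text{main}|^2$ over the \emph{third factor} $c\asymp x^{1/3}$. The mean-value estimate
\[
\int_1^T \bigl|\zeta(\tfrac12+it)A(\tfrac12+it)\bigr|^2\,dt \ \ll\ \frac{TL}{U}\log^2(TU) + \frac{T^{1/2}L^2}{U}\log(TU)
\]
then bounds the number of bad integers $c$, and almost-density of $\mathcal{A}_2$ furnishes a good $c\in\mathcal{A}_2$. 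The $T^{1/2}$ saving in this zeta mean-value is precisely what forces $\delta = x^{-4/9+\epsilon}$ and hence the exponent $5/9$; your elementary shifted-convolution approach has no analogue of this input.
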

\begin{cor}
For any $\epsilon > 0$ and all sufficiently large $x$, the interval $[x, x + x^{5/9 + \epsilon}]$ contains an integer $n = p_1 \cdot p_2 \cdot m$ for some primes $p_1, p_2$ and integer $m$ with $p_1, p_2, m \asymp x^{1/3}$.
\end{cor}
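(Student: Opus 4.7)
The plan is to deduce this corollary as a direct application of Theorem \ref{thm1}, taking both ``almost dense'' sequences to be the set of primes. The entire content of the argument is therefore to verify that the primes satisfy the density hypothesis \eqref{denseA}.

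First I would let $\mathcal{A}_1 = \mathcal{A}_2 = \mathcal{P}$, the set of rational primes, and check that $\mathcal{P}$ is ``almost dense.'' By the prime number theorem (or even by the weaker Chebyshev bounds, which suffice), for all sufficiently large $X$ we have
\[
\# \{ X \le p \le 2X : p \text{ prime} \} = \pi(2X) - \pi(X) \gg \frac{X}{\log X}.
\]
Given any $\epsilon > 0$, we have $\log X = o(X^{\epsilon/4})$, so for $X$ large enough $X / \log X \ge X^{1 - \epsilon/4}$, which gives the required bound \eqref{denseA} with some constant $c_{\epsilon, \mathcal{P}} > 0$. Hence $\mathcal{P}$ is almost dense in the sense of the definition (in fact with significant room to spare).

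With both sequences being the primes, Theorem \ref{thm1} directly produces, in the interval $[x, x + x^{5/9 + \epsilon}]$, an integer of the form $n = m \cdot p_1 \cdot p_2$ with $p_1 \in \mathcal{P}$, $p_2 \in \mathcal{P}$, $m \in \mathbb{Z}$, and $p_1, p_2, m \asymp x^{1/3}$. This is exactly the assertion of the corollary.

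There is essentially no obstacle here: all the analytic work is absorbed into Theorem \ref{thm1}, and the ``almost dense'' hypothesis was calibrated precisely so that density-$X / \log X$ sequences such as the primes qualify automatically. The exponent $1 - \epsilon/4$ in the definition gives exactly the slack needed to ignore the logarithmic loss from PNT, which is why no genuine sieve input is needed for this corollary.
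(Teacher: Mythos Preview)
Your proposal is correct and is precisely the intended deduction: the paper states this corollary immediately after Theorem~\ref{thm1} without a separate proof, because it follows at once by taking $\mathcal{A}_1 = \mathcal{A}_2$ to be the primes and noting (via Chebyshev/PNT) that the primes are ``almost dense.'' There is nothing to add or change.
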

\begin{thm} \label{thm2}
For any $\epsilon > 0$ and any three ``almost dense" sequences $\mathcal{A}_1$, $\mathcal{A}_2$ and $\mathcal{A}_3$, the interval $[x, x + x^{34/55 + \epsilon}]$ contains an integer $n = m \cdot a_1 \cdot a_2 \cdot a_3$ for some $a_i \in \mathcal{A}_i$ (for $i = 1, 2, 3$) and integer $m$ with $a_1, a_2, a_3, m \asymp x^{1/4}$ for all sufficiently large $x$.
\end{thm}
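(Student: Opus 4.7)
The plan is to run a second-moment argument for the counting function
\[
S(x) \;=\; \sum_{x \le n \le x + y} r(n), \qquad r(n) = \#\{(m, a_1, a_2, a_3) : n = m a_1 a_2 a_3,\ a_i \in \mathcal{A}_i,\ m, a_i \asymp x^{1/4}\},
\]
with $y = x^{34/55 + \epsilon}$, and then to promote the resulting almost-everywhere positivity to an everywhere statement by working with a slightly shorter window. A triple $\mathbf{a} = (a_1, a_2, a_3)$ contributes an admissible $m$ for a given $x$ exactly when $x \bmod q(\mathbf{a})$, where $q(\mathbf{a}) = a_1 a_2 a_3 \asymp x^{3/4}$, falls in a specified set of size $O(y)$; this recasts the problem as one about the simultaneous distribution of $x$ among many moduli $q(\mathbf{a})$.

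For the first moment, swapping integration and summation yields $\int_X^{2X} S(x)\, dx \asymp y \sum_{n \asymp X} r(n) \gg y X^{1 - 3\epsilon/4}$, since the almost-dense hypothesis provides $\gg X^{1/4 - \epsilon/4}$ admissible $a_i$'s in each window of length $\asymp X^{1/4}$, and the integer $m$ ranges over $\asymp X^{1/4}$ values. The mean of $S$ over $[X, 2X]$ is therefore $\gg y X^{-3\epsilon/4}$, comfortably positive for the chosen $y$.

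The technical heart is the second moment
\[
\int_X^{2X} S(x)^2 \, dx = \sum_{|h| < y} (y - |h|) \sum_{n \asymp X} r(n)\, r(n+h) + O(\text{boundary}),
\]
which, apart from the diagonal $h = 0$ (controlled by a ternary-divisor bound on $\sum r(n)^2$), counts near-coincidences $m_1 a_1 a_2 a_3 - m_2 b_1 b_2 b_3 = h \ne 0$ between two almost fourth-powers. I would fix $h$ together with one factor on each side, apply Cauchy--Schwarz in a well-chosen remaining variable, and insert a divisor-type bound for the shifted-convolution sum that results. Balancing the off-diagonal estimate against the requirement $\mathrm{Var}(S) = o(\mathbb{E}(S)^2 \cdot y / X)$ needed in the final step pins down the threshold $\theta = 34/55$; this optimisation is the main obstacle, and the extra factor relative to Theorem~\ref{thm1} is precisely what shifts the exponent from $5/9$ to $34/55$.

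To upgrade from almost-every $x$ to \emph{every} $x$, I would rerun the whole argument with the shorter window $y' = y/3$ in place of $y$. Chebyshev's inequality and the variance bound above give $\bigl|\{x \in [X, 2X] : S_{y'}(x) = 0\}\bigr| \le X \cdot \mathrm{Var}(S_{y'}) / \mathbb{E}(S_{y'})^2 = o(y')$. Hence any interval $[x_0, x_0 + y]$ of length $y$ must contain some $x^\ast$ with $S_{y'}(x^\ast) \ge 1$, and the corresponding valid integer $n$ lies in $[x^\ast, x^\ast + y'] \subseteq [x_0, x_0 + y]$, which yields Theorem~\ref{thm2}.
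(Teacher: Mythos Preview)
Your outline diverges from the paper's argument and, as written, has a real gap at the decisive step.

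The paper does not run a shifted--convolution analysis on $\sum_{n}r(n)r(n+h)$. Instead it fixes $x$, writes the short--interval count via a contour integral for $\zeta(s)A(s)$ with $A(s)$ a Dirichlet polynomial supported on products $a_1a_2$ with $a_i\in\mathcal{A}_i$, and then takes a second moment over the \emph{third factor} $c\asymp x^{1/4}$ (not over $x$). By Proposition~1 this reduces to mean values of $|\zeta(\tfrac12+it)A(\tfrac12+it)|^2$; Bourgain's subconvexity bound $\zeta(\tfrac12+it)\ll t^{13/84+\epsilon}$ is pulled out, and the mean--value theorem is applied to $|A_0|^4$. The balancing gives $\delta=x^{-21/55+\epsilon}$, and $34/55=1-21/55$ is exactly $1-\tfrac{1}{2+4\cdot 13/84}$. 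In other words, the specific exponent $34/55$ is \emph{the} footprint of Bourgain's $13/84$; replacing it by the convexity bound $1/4$ would only give $2/3$, and Lindel\"of would give $1/2$. Since $\mathcal{A}_3$ is almost dense in $[100\Delta,200\Delta]$ and the bad set of $c$'s has size $o(\Delta)$, some $c\in\mathcal{A}_3$ is good and the theorem follows for every $x$ directly, with no ``upgrade'' needed.

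Your sketch, by contrast, asserts that ``balancing the off-diagonal estimate \dots\ pins down the threshold $\theta=34/55$'' without saying what the off-diagonal bound is or how it is obtained. A purely combinatorial treatment of $\sum_n r(n)r(n+h)$ via Cauchy--Schwarz and divisor bounds, with $r$ a restricted $4$-fold convolution, does not by itself produce $34/55$; no known elementary shifted--convolution estimate for quaternary products yields this exponent, and any route that does recover it would have to feed in subconvexity for $\zeta$ somewhere. Moreover, your upgrade step requires the exceptional set in $[X,2X]$ to have measure $o(y')\asymp o(X^{34/55})$, i.e.\ a power saving of $X^{21/55}$ over the trivial $O(X)$, which is a substantially stronger variance estimate than the usual ``almost all'' conclusion and is precisely what the paper sidesteps by averaging over $c$ instead of over $x$. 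As it stands, the crucial second--moment input is missing, and without it the claimed exponent is unjustified.
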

\begin{cor}
For any $\epsilon > 0$ and all sufficiently large $x$, the interval $[x, x + x^{34/55 + \epsilon}]$ contains an integer $n = p_1 \cdot p_2 \cdot p_3 \cdot m$ for some primes $p_1, p_2, p_3$ and integer $m$ with $p_1, p_2, p_3, m \asymp x^{1/4}$. Note that $34 / 55 = 0.61818\ldots$.
\end{cor}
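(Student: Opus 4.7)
The proof of this corollary is essentially a direct specialization of Theorem \ref{thm2}, so the plan is short. The idea is to take all three ``almost dense'' sequences to be the primes and verify the density condition.

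First, I would set $\mathcal{A}_1 = \mathcal{A}_2 = \mathcal{A}_3 = \mathcal{P}$, the set of prime numbers. To invoke Theorem \ref{thm2}, I need to check that $\mathcal{P}$ is ``almost dense'' in the sense of the definition, i.e.\ that for every $\epsilon > 0$ there is a constant $c_\epsilon > 0$ with
\[
\# \{ X \le p \le 2X : p \in \mathcal{P} \} \ge c_\epsilon X^{1 - \epsilon/4}
\]
for all sufficiently large $X$. This is a very weak condition compared to what is known: by the prime number theorem (or even the elementary Chebyshev bound), the left-hand side is $\gg X / \log X$, and since $X / \log X \gg_\epsilon X^{1 - \epsilon/4}$ for all sufficiently large $X$, the condition is clearly satisfied.

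With the density condition verified, I would simply apply Theorem \ref{thm2} with $\mathcal{A}_1 = \mathcal{A}_2 = \mathcal{A}_3 = \mathcal{P}$. The theorem produces an integer $n = m \cdot a_1 \cdot a_2 \cdot a_3 \in [x, x + x^{34/55 + \epsilon}]$ with $a_i \in \mathcal{P}$ and $a_1, a_2, a_3, m \asymp x^{1/4}$. Relabeling $a_i = p_i$, this is exactly the conclusion of the corollary.

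Since the whole argument is just a substitution into Theorem \ref{thm2}, there is no real obstacle; the only thing to watch is that the definition of ``almost dense'' is genuinely satisfied by the primes, which is immediate. The numerical remark $34/55 = 0.61818\ldots$ requires no proof.
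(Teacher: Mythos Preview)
Your proposal is correct and matches the paper's approach: the corollary is stated immediately after Theorem \ref{thm2} with no separate proof, the intended argument being exactly the specialization $\mathcal{A}_1 = \mathcal{A}_2 = \mathcal{A}_3 = \mathcal{P}$ together with the trivial verification (via Chebyshev or the prime number theorem) that the primes are ``almost dense''. There is nothing to add.
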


Similar to \cite{C}, we can obtain the following almost all result for almost cubes.
\begin{thm} \label{thm3}
Let $\epsilon > 0$ and $X > 0$ be sufficiently large. Then, for almost all $x \in [X, 2X]$, the interval $[x, x + x^{13/55 + \epsilon}]$ contains an almost cube $n = n_1 n_2 n_3$ with $x^{1/3} / 2 \le n_1, n_2, n_3 \le 2 x^{1/3}$. Here almost all means apart from a set of measure $o(X)$. Note that $13/55 = 0.23636\ldots$.
\end{thm}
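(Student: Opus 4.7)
My plan is to apply the second moment method, in the spirit of \cite{C} for the almost-square analogue. Set $H := X^{13/55 + \epsilon}$, and replace the $x$-dependent condition $n_i \in [x^{1/3}/2, 2 x^{1/3}]$ with the stricter but $x$-independent condition $n_i \in I := [(2X)^{1/3}/2, \, 2 X^{1/3}]$, which is contained in $[x^{1/3}/2, 2 x^{1/3}]$ for every $x \in [X, 2X]$. Define
\[
N(x) := \#\{ (n_1, n_2, n_3) \in I^3 : x \le n_1 n_2 n_3 \le x + H \}.
\]
It then suffices to show $N(x) \ge 1$ for almost all $x \in [X, 2X]$, and by Cauchy--Schwarz
\[
\#\{ x \in [X, 2X] : N(x) \ge 1 \} \ge \frac{ \left( \int_X^{2X} N(x) \, dx \right)^2 }{ \int_X^{2X} N(x)^2 \, dx },
\]
so the task reduces to proving $\int N^2 \le (1 + o(1)) (\int N)^2 / X$.

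A routine Fubini calculation will give $\int_X^{2X} N(x) \, dx = (c + o(1)) H X$ for an explicit positive constant $c$. Expanding the second moment yields
\[
\int_X^{2X} N(x)^2 \, dx = \sum_{|h| < H} (H - |h|) A(h) + O(H^2),
\]
where $A(h)$ counts ordered pairs $((n_1, n_2, n_3), (n_1', n_2', n_3')) \in I^6$ with $n_1 n_2 n_3 - n_1' n_2' n_3' = h$ and $n_1 n_2 n_3 \in [X, 2X]$. Standard divisor bounds give $A(0) \le \sum_{y \asymp X} d_3(y)^2 \ll X (\log X)^{O(1)}$, so the diagonal contributes $\ll H X (\log X)^{O(1)} = o(H^2 X)$.

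To extract the main term from the off-diagonal I would parametrize $a := n_1 n_2$ and $a' := n_1' n_2'$ and count weighted solutions of the linear equation $a n_3 - a' n_3' = h$ with $a, a' \asymp X^{2/3}$ and $n_3, n_3' \asymp X^{1/3}$, weighted by $R(a) R(a')$ where $R(m) := \#\{ (n_1, n_2) \in (I \cap \mathbb{Z})^2 : n_1 n_2 = m \}$. For fixed $(n_3, n_3')$, equidistribution of $a$ in arithmetic progressions modulo $n_3'/\gcd(n_3, n_3')$ produces an expected main term $M(h)$, and one checks that
\[
\sum_{|h| < H} (H - |h|) M(h) = (1 + o(1)) c^2 H^2 X = (1 + o(1)) \frac{(\int N)^2}{X},
\]
as required.

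The main obstacle will be establishing the error estimate
\[
\sum_{0 < |h| \le H} (H - |h|) \, |A(h) - M(h)| = o(H^2 X).
\]
This is a short-interval shifted-convolution bound for the restricted three-fold divisor function at scales $H \ll X^{2/3}$, which is precisely the regime where the trivial bound fails. I expect to handle it either via a van der Corput exponent-pair bound applied to the exponential sums $\sum_{a, a'} e(\alpha (a n_3 - a' n_3'))$ detecting the condition $|a n_3 - a' n_3'| \le H$, or by a Hooley-style dissection combined with bilinear counting of lattice points on the surface $a n_3 - a' n_3' = h$. The exponent $13/55$ should then emerge from optimizing the parameter split in this bilinear estimate, balancing the trivial and non-trivial contributions.
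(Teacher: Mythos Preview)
Your high-level plan---reduce to a variance estimate for the counting function $N(x)$ over $x\in[X,2X]$---is exactly what the paper does, but the paper executes it analytically rather than combinatorially. It writes a smoothed version of $N(x)$ as a Perron-type integral
\[
\Phi(y)=\frac{1}{2\pi i}\int_{(\sigma)} \zeta(s)\,N(s)^2\,y^{s}\,\frac{(e^{\delta s}-1)^2}{s^2}\,ds,
\qquad N(s)=\sum_{U-L\le n\le U+L} n^{-s},
\]
shifts to $\Re s=\tfrac12$, and bounds $J=\int_x^{2x}|\Phi(y)-\text{main}|^2\,dy$ by combining the mean-value theorem for Dirichlet polynomials (applied to $N(s)^2$) with a pointwise subconvexity bound for $\zeta(\tfrac12+it)$. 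Your shifted-convolution sum $\sum_{|h|<H}(H-|h|)A(h)$ is, after Fourier/Mellin unfolding, precisely this $J$; so the two frameworks are equivalent, and the real content lies entirely in the error-term estimate you defer to the last paragraph.

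Here is the gap. The exponent $13/55$ is not an artifact of ``optimizing the parameter split'' in a bilinear or Hooley-type argument; it is the direct footprint of Bourgain's decoupling bound $\zeta(\tfrac12+it)\ll_\epsilon |t|^{13/84+\epsilon}$. In the paper's calculation one gets, for any admissible exponent $\mu$ with $\zeta(\tfrac12+it)\ll |t|^{\mu+\epsilon}$, an interval length $x^{2\mu/(1+2\mu)+\epsilon}$; plugging in $\mu=13/84$ gives $2\mu/(1+2\mu)=13/55$. The tools you name would not reach this: classical van der Corput ($\mu=1/6$) yields only $x^{1/4}$, and even Huxley's $\mu=32/205$ gives $x^{64/269}$. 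Any purely elementary treatment of the exponential sums $\sum_{a}e(\alpha a)$ with $a=n_1n_2$ is, in effect, re-proving a bound on $\zeta(\tfrac12+it)^2$, and there is no known route to $\mu=13/84$ that avoids decoupling. So as written your proposal cannot produce the stated exponent; to close it you must invoke Bourgain's bound (Lemma~\ref{lem-bour}) at the point where you estimate the off-diagonal error, which in practice means passing to the Dirichlet-series formulation the paper uses.
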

Assuming the Lindel\"{o}f hypothesis, we have the following conditional close-to-optimal result.
\begin{thm} \label{thm4}
Let $\epsilon > 0$ and $X > 0$ be sufficiently large. Under the Lindel\"{o}f hypothesis, for almost all $x \in [X, 2X]$, the interval $[x, x + x^{\epsilon}]$ contains an almost cube $n = n_1 n_2 n_3$ with $x^{1/3} / 2 \le n_1, n_2, n_3 \le 2 x^{1/3}$.
\end{thm}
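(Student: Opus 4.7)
The plan is to adapt the second-moment method that drives the proof of Theorem~\ref{thm3}, replacing the unconditional bounds on the Dirichlet polynomial moments with the essentially square-root-cancelled bounds available under the Lindelöf hypothesis. Set $h := X^\epsilon$ and define
\[
R(x) := \#\bigl\{(n_1, n_2, n_3) \in \mathbb{Z}^3 : \tfrac{1}{2} X^{1/3} \le n_i \le 2 X^{1/3},\ x \le n_1 n_2 n_3 \le x + h\bigr\}.
\]
A direct computation gives $\int_X^{2X} R(x)\,dx \asymp Xh$, so the mean $\bar R$ of $R(x)$ over $[X, 2X]$ is of order $h$. By Chebyshev's inequality, it suffices to show that the variance $V := \int_X^{2X} (R(x) - \bar R)^2\,dx$ is $o(Xh^2)$, for then $R(x) > 0$ for all $x \in [X, 2X]$ outside a set of measure $o(X)$, and each such $x$ yields an almost cube $n_1 n_2 n_3 \in [x, x + h] \subseteq [x, x + x^\epsilon]$ with $n_i \asymp x^{1/3}$.

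Expanding the square and switching the order of summation gives
\[
\int_X^{2X} R(x)^2\,dx = h \sum_{n \asymp X} r(n)^2 + 2 \sum_{d=1}^{h}(h - d)\,\Delta(d),
\]
where $r(n) := \#\{n_1 n_2 n_3 = n : n_i \asymp X^{1/3}\}$ is the restricted triple divisor function and $\Delta(d) := \sum_{n \asymp X} r(n) r(n+d)$. Since $r \le d_3$, the diagonal term is $\ll h X (\log X)^{O(1)}$, which is comfortably $o(Xh^2)$ as $h = X^\epsilon$. The core of the argument is to establish the shifted-convolution asymptotic
\[
\Delta(d) = M(d) + O(X^{1/2 + \epsilon})
\]
uniformly for $1 \le d \le h$, for an explicit main term $M(d)$; under the Lindelöf hypothesis, this follows from the standard $\delta$-method (or the spectral approach of Motohashi), with Lindelöf entering through the sixth-moment bound $\int_0^T |\zeta(\tfrac12 + it)|^6\,dt \ll_\epsilon T^{1+\epsilon}$. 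The total error contribution to $V$ is $O(h^2 X^{1/2+\epsilon}) = o(Xh^2)$, and a routine computation shows that the main-term sum $\sum_{d=1}^{h} (h - d) M(d)$ matches $\tfrac12 X \bar R^2$ up to $o(Xh^2)$.

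The main obstacle is transferring the standard shifted-convolution estimate (traditionally stated for the unrestricted divisor function $d_3$) to the restricted function $r(n)$, uniformly in the shift $1 \le d \le h$, and with the factors $n_i$ confined to the dyadic range $[\tfrac12 X^{1/3}, 2 X^{1/3}]$. This is carried out by smoothing the sharp cutoffs via Mellin transforms, converting the restricted sum into a contour integral over vertical lines whose integrand is controlled pointwise by Lindelöf; a careful bookkeeping then ensures that all resulting contributions fit inside the final $o(Xh^2)$ budget.
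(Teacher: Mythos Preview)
Your proposal and the paper both run a second-moment/variance argument, but the routes diverge at the point where the variance has to be bounded, and your route hits a genuine obstruction. You reduce everything to the shifted convolution
\[
\Delta(d)=\sum_{n\asymp X} r(n)\,r(n+d),\qquad r(n)=\#\{n_1n_2n_3=n:\ n_i\asymp X^{1/3}\},
\]
and assert that under Lindel\"of one has $\Delta(d)=M(d)+O(X^{1/2+\epsilon})$ by ``the standard $\delta$-method or the spectral approach of Motohashi''. This is the gap: no such result is available. Motohashi's spectral machinery handles the \emph{binary} problem $\sum d(n)d(n+h)$; the ternary correlation $\sum d_3(n)d_3(n+h)$ is far deeper, and even under Lindel\"of a power-saving asymptotic---let alone the square-root error you claim---does not follow from any method one could call standard. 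The sixth-moment bound $\int_0^T|\zeta(\tfrac12+it)|^6\,dt\ll T^{1+\epsilon}$ controls the diagonal $\sum d_3(n)^2$, not the off-diagonal correlations $\Delta(d)$. Your final paragraph, which invokes Mellin inversion only to pass from $d_3$ to the restricted $r$, still takes the unrestricted $d_3\times d_3$ estimate as its core input, so the gap is not closed there.

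The paper sidesteps all of this. Rather than opening the square into shifted convolutions, it keeps the second moment in Mellin form from the start and, crucially, replaces one of the three truncated factors $N(s)$ by $\zeta(s)$ (the third factor is automatically $\asymp x^{1/3}$ once the other two are). The variance then collapses to integrals of $|\zeta(\tfrac12+it)|^2\,|N(\tfrac12+it)|^4$ over dyadic ranges of~$t$. Lindel\"of bounds the $\zeta$-factor pointwise by $t^{\epsilon}$, and what remains, $|N|^4=|N^2|^2$, is the mean square of a Dirichlet polynomial of length $\asymp x^{2/3}$, handled by the elementary mean-value theorem (Lemma~\ref{lem-mv}). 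No shifted-convolution input is needed. If you want to repair your argument, this is exactly the fix: do not expand the square in physical space; Mellin-transform first and let Lindel\"of plus the fourth moment of the short polynomial $N$ carry the whole weight.
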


{\bf Notation.} The symbols $f(x) = O(g(x))$, $f(x) \ll g(x)$, and $g(x) \gg f(x)$ are equivalent to $|f(x)| \leq C g(x)$ for some constant $C > 0$. The symbol $f(x) \asymp g(x)$ means that $f(x) \ll g(x) \ll f(x)$. Also, $f(x) = O_{\lambda} (g(x))$ and $f(x) \ll_{\lambda} g(x)$ mean that the implicit constant may depend on the parameter $\lambda$. The symbol $f(x) = o(g(x))$ means $\lim_{x \rightarrow \infty} \frac{f(x)}{g(x)} = 0$. The ceiling function $\lceil x \rceil$ is the least integer greater than or equal to $x$ while the floor function $\lfloor x \rfloor$ is the greatest integer less than or equal to $x$.
%-------------------------------------------------------------------------------

\section{Some preparations}

We employ the method in \cite{C} which was inspired by Soundararajan \cite{S}. Let $\mathcal{A}$ be an ``almost dense" infinite sequence of positive integers.  Suppose $a_n$ is a set of coefficients such that, for all $\epsilon > 0$, $1 \le a_n \ll_\epsilon n^\epsilon$ if $n \in \mathcal{A}$ and $a_n = 0$ if $n \not\in \mathcal{A}$. With $1 \le L \le U/2 < U$, define
\[
A(s) := \mathop{\sum_{U - L \le n \le U + L}}_{n \in \mathcal{A}} \frac{a_n}{n^s}.
\]
It follows from \eqref{denseA} that
\begin{equation} \label{denseA2}
A(1) \gg_\epsilon U^{-\epsilon / 4}
\end{equation}
by partial summation. Let $\mathcal{B}$ be another infinite sequence of positive integers with weights $b_m \ll_\epsilon m^\epsilon$ for all $\epsilon > 0$ and $m \in \mathcal{B}$ such that its Dirichlet series
\[
B(s) := \sum_{m \in \mathcal{B}} \frac{b_m}{m^s} \; \text{ has a pole at } s = 1 \text{ with  residue } r_{\mathcal{B}} > 0
\]
and an analytic continuation to the half-plane $\Re s > 0$ with no other poles. Consider
\[
\Phi(w) := \frac{1}{2 \pi i} \int_{\sigma - i \infty}^{\sigma + i \infty} B(s) A(s) w^s \frac{(e^{\delta s} - 1)^2}{s^2} ds.
\]
with $\sigma = 1 + 1 / \log x$. By shifting contour and Cauchy residue theorem, one has
\[
\frac{1}{2 \pi i} \int_{\sigma - i \infty}^{\sigma + i \infty} \xi^s \frac{ds}{s^2} = \left\{ \begin{array}{ll} \log \xi, & \text{ if } \xi \ge 1, \\
0, & \text{ if } 0 < \xi \le 1 \end{array} \right.
\]
and
\[
\frac{1}{2 \pi i} \int_{\sigma - i \infty}^{\sigma + i \infty} \xi^s \frac{(e^{\delta s} - 1)^2}{s^2} ds = \left\{ \begin{array}{ll} \min( \log (e^{2\delta} \xi), \log(1/\xi)), & \text{ if } e^{-2 \delta} \le \xi \le 1, \\
0, & \text{ otherwise}. \end{array} \right.
\]
Thus,
\[
\Phi(w) = \mathop{\mathop{\sum_{n \in \mathcal{A}, m \in \mathcal{B}}}_{w \le m n \le w e^{2 \delta}}}_{U - L \le n \le U + L} b_m a_n \min \Bigl( \log \frac{e^{2 \delta} w}{m n}, \log \frac{m n}{w} \Bigr).
\]

\bigskip

Now we shift the line of integration in the definition of $\Phi(w)$ to the left. By Cauchy residue theorem,
\begin{equation} \label{integral}
\Phi(w) - r_{\mathcal{B}} \cdot w (e^\delta - 1)^2 A(1) = \frac{-1}{2 \pi i} \int_{\eta - i \infty}^{\eta + i \infty} B(s) A(s) w^s \frac{(e^{\delta s} - 1)^2}{s^2} ds.
\end{equation}
for $\eta := 1/2$. The new idea is to study the following second moment
\begin{equation} \label{I}
I := \int_{100 \Delta}^{200 \Delta} \Big| \Phi \Bigl( \frac{x}{c} \Bigr) - \frac{x}{c} (e^\delta - 1)^2 A(1) r_{\mathcal{B}} \Big|^2 dc
\end{equation}
and show that it is small.
\begin{prop} \label{prop1}
For any $\epsilon > 0$ and $0 < \delta < 1$,
\[
I \ll_\epsilon x \delta^{2} \sum_{j \ge 0} \Bigl( \frac{\delta}{2^j} \Bigr)^{2 - \epsilon / 4} \int_{2^{j-1} / \delta}^{2^j / \delta} \Big| B (\eta + i t) A(\eta + i t) \Big|^2 dt.
\]
\end{prop}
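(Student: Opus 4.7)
My plan is to reduce $I$ to the $L^2$-integral of a Fourier transform over a bounded $u$-interval via change of variables, then apply Plancherel's theorem. Starting from \eqref{integral} and parametrizing the contour as $s = \eta + it = 1/2 + it$, I would set
\[
F(t) := B(\tfrac{1}{2}+it)\, A(\tfrac{1}{2}+it)\, \frac{(e^{\delta(1/2+it)} - 1)^2}{(1/2+it)^2},
\]
so that \eqref{integral} reads $\Phi(x/c) - \frac{x}{c}(e^\delta - 1)^2 A(1)\, r_{\mathcal{B}} = -\frac{(x/c)^{1/2}}{2\pi} \int_{-\infty}^\infty F(t)\, (x/c)^{it}\, dt$. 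Substituting $u = \log(x/c)$ (so $u$ ranges over an interval $[u_1, u_2]$ of length $\log 2$), the factor $(x/c)^{1/2}$ combines cleanly with $|dc| = c\,du$ to produce
\[
I = \frac{x}{4\pi^2} \int_{u_1}^{u_2} |\hat F(u)|^2\, du, \qquad \hat F(u) := \int_{-\infty}^\infty F(t)\, e^{itu}\, dt.
\]

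Next, since the integrand is non-negative, I would bound the $u$-integral by its extension to all of $\mathbb{R}$ and invoke Plancherel's identity:
\[
I \le \frac{x}{4\pi^2} \int_{\mathbb{R}} |\hat F(u)|^2\, du = \frac{x}{2\pi} \int_{\mathbb{R}} |F(t)|^2\, dt.
\]
Validity of Plancherel requires $F \in L^2(\mathbb{R})$: the trivial bound $|A(\tfrac12+it)| \ll L\, U^{\epsilon - 1/2}$ is uniform in $t$, $B(\tfrac12+it)$ grows only polynomially on its analytic continuation, and the kernel factor supplies $|t|^{-2}$ decay, so $|F(t)|^2$ is integrable on $\mathbb{R}$.

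Finally, I would estimate the kernel via the elementary inequality $|e^{\delta s} - 1| \ll \min(\delta |s|, 1)$ on $\Re s = \tfrac12$, which gives
\[
\left|\frac{(e^{\delta(1/2+it)} - 1)^2}{(1/2+it)^2}\right|^2 \ll \min\left(\delta^4,\, \frac{1}{(1+|t|)^4}\right).
\]
Dyadically decomposing $\int_{\mathbb{R}} |F(t)|^2\, dt$ into blocks $|t| \asymp 2^j/\delta$ for $j \ge 0$ (letting the $j=0$ block absorb all of $|t| \le 1/\delta$), the kernel bound yields $\delta^4/2^{4j}$ times $\int_{|t|\asymp 2^j/\delta} |BA|^2\, dt$ for each block. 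Since $\delta < 1$ and $j \ge 0$, the trivial inequality
\[
\frac{\delta^4}{2^{4j}} \le \frac{\delta^{4-\epsilon/4}}{2^{j(2-\epsilon/4)}} = \delta^2 \left(\frac{\delta}{2^j}\right)^{2-\epsilon/4}
\]
then converts the resulting dyadic sum into exactly the form stated in Proposition \ref{prop1}.

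The main obstacle will be the $L^2$-integrability of $F$, which requires polynomial growth of $B$ on the line $\Re s = 1/2$; this should follow from standard convexity once one knows $B$ has polynomial growth on $\Re s = 0$. The change of variables, the kernel inequality, and the dyadic reduction are all elementary calculations, and the conceptual heart of the argument is the recognition that the integrand of $I$ is (up to smooth factors) the squared modulus of a windowed Fourier integral to which Plancherel applies directly.
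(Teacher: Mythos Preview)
Your proof is correct and takes a genuinely different route from the paper's. The paper expands the square in \eqref{I2} directly, integrates over $c$ to produce the off-diagonal kernel $1/(1+|t-s|)$, symmetrizes via $2|fg| \le |f|^2 + |g|^2$, and then integrates out $s$ using the logarithmic bound of Lemma~\ref{lem-logbd}; the resulting $\log(2^{\max(j,j')}/\delta)$ factor is what forces the weakening from $(\delta/2^j)^2$ to $(\delta/2^j)^{2-\epsilon/4}$. Your approach instead recognizes that, after the substitution $u=\log(x/c)$, the integrand of $I$ is exactly $|\widehat F(u)|^2$ up to the constant factor $x$, so extending the $u$-range to $\mathbb{R}$ and applying Plancherel collapses the double integral to a single one with no log loss at all; your intermediate bound $I \ll x \sum_{j}(\delta/2^j)^4 \int |BA|^2$ is strictly sharper than the proposition, and you only sacrifice the extra saving at the very end to match the stated form. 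The one hypothesis you need that the paper does not make explicit is polynomial growth of $B$ on $\Re s = 1/2$ (to guarantee $F\in L^2$), but this is already implicitly required to justify the contour shift in \eqref{integral}, and holds in every application in the paper where $B=\zeta$.
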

Suppose $c \in [100 \Delta, 200 \Delta]$ is an integer such that $\Phi(\frac{x}{c}) = 0$. As $\frac{x}{c} - \frac{x}{c + \theta} = \frac{\theta x}{c (c + \theta)} \le \frac{\theta x}{\Delta^2}$ for $0 \le \theta \le 1$,
\[
\Big| \Phi \Big(\frac{x}{c}\Bigr) - \Phi \Bigl(\frac{x}{c + \theta} \Bigr) \Big| \ll_\epsilon \frac{4 \theta x}{\Delta^2} \cdot \Delta^{\epsilon / 4} U^{\epsilon / 4} \delta
\]
which is less than
\begin{equation} \label{shortcond}
\frac{x}{2 c} (e^\delta - 1)^2 A(1) r_{\mathcal{B}} \; \; \text{ when } \; \; \theta \le c_{\epsilon} A(1) r_{\mathcal{B}} \cdot \frac{\Delta^{1 - \epsilon / 4} \delta}{U^{\epsilon / 4}}
\end{equation}
for some small constant $c_\epsilon > 0$. Let $C$ denote the number of integers $c \in [100 \Delta, 200 \Delta]$ such that $\Psi(\frac{x}{c}) = 0$. Combining \eqref{denseA2}, \eqref{I} and \eqref{shortcond} with Proposition \ref{prop1}, we obtain
\begin{equation} \label{sizeC}
C \ll_{\epsilon, \mathcal{A}, \mathcal{B}} \frac{U^\epsilon \Delta^{1 + \epsilon / 4}}{x \delta^3}  \sum_{j \ge 0} \Bigl( \frac{\delta}{2^j} \Bigr)^{2 - \epsilon / 4} \int_{2^{j-1} / \delta}^{2^j / \delta} \Big| B (\eta + i t) A(\eta + i t) \Big|^2 dt.
\end{equation}
Our eventual goal is to show that $C = o(\Delta)$.

\bigskip

Now, we recall a mean-value theorem and a majorant principle for Dirichlet polynomials.

\begin{lem} \label{lem-mv}
Let $D(s) = \sum_{n = 1}^{N} d_n n^{-s}$ be a Dirichlet polynomial. Then
\[
\int_{0}^{T} |D(i t)|^2 dt = (T + O(N)) \sum_{n = 1}^{N} |d_n|^2.
\]
\end{lem}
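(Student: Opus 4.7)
The plan is to expand $|D(it)|^2$ multiplicatively, integrate termwise over $[0,T]$, and separate the diagonal from the off-diagonal contribution. Writing
\[
|D(it)|^2 = D(it)\,\overline{D(it)} = \sum_{1 \le m, n \le N} d_m \overline{d_n} \Bigl(\frac{n}{m}\Bigr)^{it},
\]
the terms with $m = n$ contribute exactly $T \sum_{n \le N} |d_n|^2$, which supplies the main term of the lemma.

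For the off-diagonal part I would use
\[
\int_0^T \Bigl(\frac{n}{m}\Bigr)^{it} dt = \frac{(n/m)^{iT} - 1}{i \log(n/m)} \qquad (m \ne n),
\]
and split the resulting double sum into the two pieces coming from $(n/m)^{iT}$ and from $-1$. After absorbing the factors $n^{\pm iT}$ into new coefficients, each piece takes the form
\[
\sum_{m \ne n} \frac{x_m \overline{x_n}}{\lambda_n - \lambda_m},
\]
with $\lambda_n := \log n$ and $x_n$ equal to either $d_n$ or $d_n n^{-iT}$; in particular $|x_n| = |d_n|$ in both cases. The crucial input is the Hilbert-type inequality of Montgomery and Vaughan,
\[
\Bigl| \sum_{m \ne n} \frac{x_m \overline{x_n}}{\lambda_m - \lambda_n} \Bigr| \ll \frac{1}{\delta} \sum_n |x_n|^2, \qquad \delta := \min_{m \ne n} |\lambda_m - \lambda_n|.
\]
For distinct $m, n \le N$ we have $|\log(n/m)| \ge \log(1 + 1/N) \gg 1/N$, so taking $\delta \gg 1/N$ bounds each piece by $\ll N \sum_{n \le N} |d_n|^2$. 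Summing the two contributions yields the $O\bigl(N \sum |d_n|^2\bigr)$ error demanded by the statement.

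The only real technical input is the Montgomery--Vaughan inequality itself: without it, a pairwise bound using $|\log(n/m)| \asymp |n - m|/\min(n,m)$ would yield only $O(N \log N \sum |d_n|^2)$, losing an unwanted logarithm. Since that inequality is classical, no serious obstacle remains, and the proof reduces to the routine manipulations sketched above.
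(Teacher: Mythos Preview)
Your argument is correct and is precisely the classical proof: the paper does not prove this lemma itself but merely cites Montgomery's \emph{Ten Lectures} (Chapter~7, Theorem~1), whose proof is exactly the diagonal/off-diagonal expansion combined with the Montgomery--Vaughan generalized Hilbert inequality that you outline. There is nothing to add.
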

\begin{proof}
See \cite[Chapter 7, Theorem 1]{M} for example.
\end{proof}

\begin{lem} \label{lem-maj}
Suppose $|d_n| \le D_n$ for $1 \le n \le N$. Then
\[
\int_{-T}^{T} \Big| \sum_{n = 1}^{N} \frac{d_n}{n^{i t}} \Big|^2 \, dt \le 3 \int_{-T}^{T} \Big| \sum_{n = 1}^{N} \frac{D_n}{n^{i t}} \Big|^2 \, dt.
\]
\end{lem}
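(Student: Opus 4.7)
The plan is to prove Lemma \ref{lem-maj} by the standard positive-kernel Fourier argument for the majorant principle. Write $f(t) = \sum_{n=1}^N d_n n^{-it}$ and $g(t) = \sum_{n=1}^N D_n n^{-it}$, and recall the bilinear expansion $|f(t)|^2 = \sum_{m,n} d_m \overline{d_n} (n/m)^{it}$, and similarly for $|g|^2$.

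First I would introduce an auxiliary kernel $V \colon \mathbb{R} \to [0,\infty)$ enjoying two properties: (i) $V(t) \ge \mathbf{1}_{[-T,T]}(t)$ pointwise, and (ii) its Fourier transform $\hat V(\omega) = \int_{\mathbb{R}} V(t) e^{-i\omega t} dt$ is pointwise nonnegative. A clean way to secure (ii) is to take $V = p \ast \tilde p$, the autocorrelation of a real even bump $p$, so that $\hat V = |\hat p|^2 \ge 0$ automatically; $p$ is then scaled to enforce (i). A convenient explicit choice is $p = T^{-1/2} \mathbf{1}_{[-T,T]}$, giving the triangle $V(t) = T^{-1}(2T - |t|)_+$ of height $2$ on support $[-2T, 2T]$, which satisfies $V(t) \ge 1$ for $|t| \le T$.

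Given such $V$, property (i) yields
\[
\int_{-T}^T |f(t)|^2 \, dt \;\le\; \int_{\mathbb{R}} V(t) |f(t)|^2 \, dt \;=\; \sum_{m,n} d_m \overline{d_n}\, \hat V(\log(m/n)),
\]
and because the right side is real (the left one is) while $\hat V \ge 0$ together with $|d_m \overline{d_n}| \le D_m D_n$, a termwise majorization gives
\[
\int_{-T}^T |f(t)|^2 \, dt \;\le\; \sum_{m,n} D_m D_n \hat V(\log(m/n)) \;=\; \int_{\mathbb{R}} V(t) |g(t)|^2 \, dt.
\]

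It then remains to show $\int_{\mathbb{R}} V|g|^2 \le 3 \int_{-T}^T |g|^2$. For the triangle $V$ above one has the pointwise bound $V(t) \le 2 \cdot \mathbf{1}_{[-T,T]}(t) + \mathbf{1}_{T \le |t| \le 2T}(t)$, so it suffices to prove $\int_{T \le |t| \le 2T} |g|^2 \le (1 + o(1)) \int_{-T}^T |g|^2$. This follows from a simple translation argument combined with the mean-value Lemma \ref{lem-mv}: each block of length $T$ contributes $(T + O(N)) \sum_n D_n^2$, while $[-T,T]$ contributes $(2T + O(N)) \sum_n D_n^2$. The main obstacle is absorbing these error terms cleanly so that the final constant is exactly $3$ (rather than $3 + \varepsilon$) uniformly in $T$, which requires optimizing the shape of $p$; this is the classical argument in \cite[Ch.~7]{M}, to which I would appeal to finish.
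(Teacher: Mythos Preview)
The paper gives no argument at all here; it simply cites \cite[Chapter~7, Theorem~3]{M}. Your sketch goes further and correctly isolates the mechanism (a kernel with nonnegative Fourier transform majorizing $\mathbf{1}_{[-T,T]}$), but the particular kernel you pick leads to a genuine dead end. With the height-$2$ triangle $V(t)=(2-|t|/T)_+$ on $[-2T,2T]$ you are left needing $\int_{T\le |t|\le 2T}|g|^2\,dt \le \int_{-T}^{T}|g|^2\,dt$, and this is \emph{false} in general: take $N=2$, $D_1=D_2=1$, so $|g(t)|^2=2+2\cos(t\log 2)$, and choose $T$ with $T\log 2$ slightly larger than $\pi$. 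Appealing to Lemma~\ref{lem-mv} only gives the comparison up to an $O(N)$ error, so one cannot extract the constant $3$ this way uniformly in $T$ and $N$; your own caveat at the end is on point, and ``optimizing the shape of $p$'' alone will not rescue this route.

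The clean fix is to use three \emph{small} shifted triangles instead of one large one. Set $K(t)=(1-|t|/T)_+$, so that $\widehat K\ge 0$ and $K\le \mathbf{1}_{[-T,T]}$. One checks directly that $K(t-T)+K(t)+K(t+T)\equiv 1$ on $[-T,T]$, hence $\mathbf{1}_{[-T,T]}(t)\le \sum_{a\in\{-T,0,T\}}K(t-a)$. For each shift $a$, positivity of $\widehat K$ together with $|d_m\overline{d_n}\,e^{ia\log(n/m)}|\le D_mD_n$ gives
\[
\int_{\mathbb{R}} K(t-a)\,|f(t)|^2\,dt \;\le\; \sum_{m,n} D_mD_n\,\widehat K\bigl(\log(n/m)\bigr) \;=\; \int_{\mathbb{R}} K(t)\,|g(t)|^2\,dt \;\le\; \int_{-T}^{T}|g|^2\,dt,
\]
and summing over the three values of $a$ yields $\int_{-T}^{T}|f|^2\le 3\int_{-T}^{T}|g|^2$ with no translation or mean-value estimate needed. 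This is essentially the argument in \cite{M}.
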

\begin{proof}
See \cite[Chapter 7, Theorem 3]{M} for example.
\end{proof}

The following estimates related to the Riemann zeta function $\zeta(s)$ are also needed.
\begin{lem} \label{lem-bour}
For any $\epsilon > 0$, $\zeta(\eta + i t) \ll_\epsilon (|t| + 1)^{13 / 84 + \epsilon}$.
\end{lem}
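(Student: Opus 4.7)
The plan is to invoke this lemma as a black-box consequence of Bourgain's subconvexity bound for $\zeta(s)$ on the critical line. The exponent $13/84$ is Bourgain's, and reproving it from scratch is well beyond the scope of this paper; the actual proof will consist essentially of a citation to the original paper. Below I sketch the structure of that argument purely for orientation, so the reader can see where the numerical exponent comes from.

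First, I would recall the approximate functional equation, which for large $t$ states
\[
\zeta\Bigl(\tfrac{1}{2} + i t\Bigr) = \sum_{n \le \sqrt{|t|/(2\pi)}} \frac{1}{n^{1/2 + i t}} + \chi\Bigl(\tfrac{1}{2} + i t\Bigr) \sum_{n \le \sqrt{|t|/(2\pi)}} \frac{1}{n^{1/2 - i t}} + O(|t|^{-1/4}),
\]
with $|\chi(1/2 + it)| = 1$. Dyadic decomposition and partial summation then reduce the task to bounding pure exponential sums of the form $\sum_{n \sim N} e\bigl(\tfrac{t}{2\pi} \log n\bigr)$ for $N \le \sqrt{|t|/(2\pi)}$. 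Trivial estimation recovers the classical convexity exponent $1/4$; any subconvex saving must come from genuine cancellation in these sums.

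The main step, and also the main obstacle to a self-contained argument, is Bourgain's bound for exactly these sums. He combines the $\ell^2$ decoupling inequality of Bourgain--Demeter--Guth for the curve $\tau \mapsto (\tau, \log \tau)$ (equivalently, sharp moment bounds tied to Vinogradov's mean value theorem) with a van der Corput iteration, and the exponent $13/84$ emerges from optimizing the resulting exponent pair against the classical $A$- and $B$-processes. Given the depth of the decoupling input, I would not attempt to reproduce any of this here; the proof of the lemma proper will consist of a single line quoting the stated bound from Bourgain's paper.
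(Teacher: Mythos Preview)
Your proposal is correct and matches the paper's approach exactly: the paper's proof is a one-line citation to Bourgain's decoupling result, with no further argument. Your additional orientation sketch of how the exponent $13/84$ arises is extra context the paper does not include, but the substantive content of the proof---quoting Bourgain---is identical.
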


\begin{proof}
This is a recent breakthrough result of Bourgain \cite{B} via decoupling method.
\end{proof}

\begin{lem} \label{lem1}
For $T \ge 1$ and $U^{1/2} < L \le U / 2$,
\[
\int_{1}^{T} \big| \zeta ( \eta + i t) A ( \eta + i t ) \big|^2 \, dt \ll \frac{T L}{U} \log^2 TU + \frac{T^{1/2} L^2}{U} \log TU.
\]
\end{lem}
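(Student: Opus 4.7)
The plan is to approximate $\zeta(1/2+it)$ by short Dirichlet polynomials via the approximate functional equation and then apply Lemma~\ref{lem-mv} to the resulting products with $A$. First I dyadically split the range $[1,T]$ into pieces $[T_0, 2T_0]$. On each such piece,
\[
\zeta(1/2+it) = \sum_{n \le \sqrt{t/(2\pi)}} n^{-1/2-it} + \chi(1/2+it) \sum_{n \le \sqrt{t/(2\pi)}} n^{-1/2+it} + O(t^{-1/4}),
\]
with $|\chi(1/2+it)| = 1$. To remove the $t$-dependence of the cutoff, I further dyadically decompose each of the two sums as $\sum_Z S_Z$, where $S_Z(s) = \sum_{Z \le n < 2Z} n^{-s}$ for $Z$ dyadic up to $\sqrt{T_0}$, and use Cauchy--Schwarz so that $|\zeta(1/2+it)|^2 \ll (\log T_0) \sum_Z |S_Z(1/2+it)|^2 + O(T_0^{-1/2})$ on $[T_0, 2T_0]$.

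The core estimate is therefore $\int_{T_0}^{2T_0} |S_Z(1/2+it) A(1/2+it)|^2\, dt$. Writing $S_Z A = \sum_m c_m m^{-s}$ with coefficients $c_m = \sum_{kn=m,\, Z \le k < 2Z,\, n \in \mathcal{A} \cap [U-L, U+L]} a_n$, Lemma~\ref{lem-mv} gives
\[
\int_{T_0}^{2T_0} |S_Z A|^2\, dt \ll (T_0 + ZU) \sum_m \frac{|c_m|^2}{m}.
\]
To bound $\sum_m |c_m|^2/m$ I open the square and parametrize each solution of $k_1 n_1 = k_2 n_2$ by $d = \gcd(n_1, n_2)$, $n_i = d n_i'$ with $(n_1', n_2') = 1$, $k_1 = n_2' j$, $k_2 = n_1' j$. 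The $j$-sum contributes a factor $\log Z$, leaving
\[
\sum_m \frac{|c_m|^2}{m} \ll (\log Z) \sum_{n_1, n_2 \in [U-L, U+L]} \frac{a_{n_1} a_{n_2} \gcd(n_1, n_2)}{n_1 n_2} \ll \frac{L}{U} \log(ZU),
\]
using the standard estimate $\sum_{n_1, n_2 \in [U-L, U+L]} \gcd(n_1, n_2) \ll L U \log U$ (valid for $L \le U/2$, obtained from $\gcd = \sum_{d \mid \gcd} \phi(d)$), the hypothesis $a_n \ll_\epsilon n^\epsilon$, and the fact that $n_1 n_2 \asymp U^2$ in this range.

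Summing over dyadic $Z \le \sqrt{T_0}$ and then over dyadic $T_0 \le T$, with the factor $\log T_0$ from the Cauchy--Schwarz step, the main term $T_0 (L/U) \log(ZU)$ assembles to $(TL/U)\log^2(TU)$, while the error term $ZU \cdot (L/U) \log(ZU)$ assembles to a term of order $T^{1/2} L^2/U \log(TU)$ once the off-diagonal portion of the GCD-sum decomposition $\sum \gcd(n_1, n_2) \ll L^2 \log U + LU$ is deployed properly. The main obstacle lies here: a careless bookkeeping leaves a diagonal contribution of order $T^{1/2} L$, which is weaker than the claimed $T^{1/2} L^2/U$ by the factor $L/U$. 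Matching the stated bound requires a careful split of the GCD sum into its diagonal ($n_1 = n_2$, contributing $LU$) and off-diagonal ($n_1 \neq n_2$, contributing $L^2 \log U$) parts, using the hypothesis $L > U^{1/2}$ to ensure $L^2/U > 1$ and thus that the off-diagonal part controls the error regime; equivalently, one applies a per-coefficient Montgomery--Vaughan form so that the diagonal mass is absorbed into the main term $T_0 \sum_m |c_m|^2/m$ rather than the error. Some additional care is needed in tracking log powers across the two dyadic decompositions.
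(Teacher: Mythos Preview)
The paper does not give a self-contained proof here: it simply cites Lemma~3.3 of \cite{C} and says the general-coefficient case follows from the majorant principle (Lemma~\ref{lem-maj}). Your outline---approximate functional equation for $\zeta$, dyadic decomposition into blocks $S_Z$, mean-value theorem for $S_Z A$, and a $\gcd$-sum estimate for the convolution coefficients---is the natural route and is almost certainly what underlies the cited lemma; your use of $a_n\ll_\epsilon n^\epsilon$ inside the $\gcd$-sum is precisely where the majorant principle enters.

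There is, however, a real gap that you yourself flag but do not close. Your argument produces a secondary term of size $T^{1/2}L\log(TU)$, whereas the lemma asserts $T^{1/2}L^2/U\log(TU)$, and since $L\le U/2$ the latter is genuinely smaller. Your proposed repair---separating the $\gcd$-sum into diagonal and off-diagonal pieces and appealing to the per-coefficient Montgomery--Vaughan form---does not work. The diagonal $n_1=n_2$ contribution to $\sum_m|c_m|^2/m$ is indeed only $\asymp L/U$ and sits harmlessly in the main term; but the Montgomery--Vaughan error is governed by $\sum_m|c_m|^2$ (equivalently $\sum_m|c_m|^2\cdot O(m)/m$), and the diagonal part of \emph{that} sum is $\asymp ZL$. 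Nothing in the $\gcd$ split or in the per-coefficient form reduces this $ZL$, so after summing $Z\le\sqrt{T_0}$ you are stuck with $\sqrt{T_0}\,L$. The hypothesis $L>U^{1/2}$ plays no role in removing it. (Two minor points: your ``$j$-sum contributes $\log Z$'' overcounts---the $j$-range lies in the intersection of two dyadic windows, so $\sum_j 1/j\ll 1$---and your log-power bookkeeping is one logarithm too generous, as you note.)

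For the application in this paper (Section~4 takes $L=0.1U$, so $L^2/U\asymp L$) the distinction is immaterial, and your weaker bound would suffice. To recover the stated $T^{1/2}L^2/U$ in full generality you would need to consult \cite{C} directly; the improvement presumably comes from a sharper treatment of the product than the crude mean-value bound on $S_ZA$.
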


\begin{proof}
This is essentially Lemma 3.3 in \cite{C} after incorporating the majorant principle, Lemma \ref{lem-maj}, into it.
\end{proof}

Finally, we recall the following simple integral bound.
\begin{lem} \label{lem-logbd}
For $u \ge 0$ and $X \ge 1$,
\[
\int_{0}^{X} \frac{dv}{1 + |v - u|} \le \log (1 + u) + \log(1 + X).
\]
\end{lem}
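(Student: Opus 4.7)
My plan is to split the integral according to the sign of $v - u$ and reduce each piece to a standard antiderivative of $1/(1+w)$. The point is that on either side of the point $v = u$, the integrand $1/(1 + |v - u|)$ becomes a monotone function whose antiderivative is a logarithm, and the two resulting logarithms will match the two terms on the right-hand side of the claim.

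First I would handle the case $0 \le u \le X$, where the absolute value actually changes sign inside the interval of integration. Splitting at $v = u$ gives
\[
\int_0^X \frac{dv}{1 + |v - u|} = \int_0^u \frac{dv}{1 + (u - v)} + \int_u^X \frac{dv}{1 + (v - u)}.
\]
The substitution $w = u - v$ in the first piece and $w = v - u$ in the second piece turn these into $\int_0^u dw/(1+w) = \log(1+u)$ and $\int_0^{X-u} dw/(1+w) = \log(1 + X - u)$ respectively. Since $\log(1 + X - u) \le \log(1 + X)$, adding gives the stated bound.

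Then I would dispose of the remaining case $u > X$, where $v - u < 0$ throughout $[0, X]$, so $|v - u| = u - v$ and the integral is
\[
\int_0^X \frac{dv}{1 + u - v} = \log(1 + u) - \log(1 + u - X) \le \log(1 + u),
\]
which is certainly at most $\log(1+u) + \log(1+X)$ since the extra term is nonnegative. Combining the two cases finishes the proof. There is no real obstacle here: the bound is deliberately wasteful (one could keep $\log(1 + X - u)$ instead of $\log(1+X)$), but the stated form is the clean one that will be convenient in later applications.
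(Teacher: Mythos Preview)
Your proof is correct. The paper itself does not supply an argument but simply cites the result as Lemma~3.4 of \cite{C}; your direct case split at $v=u$ with the obvious substitutions is exactly the natural elementary verification, so there is nothing to compare.
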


\begin{proof}
This is Lemma 3.4 in \cite{C}.
\end{proof}

%--------------------------------------------------------------------------
\section{Proof of Proposition \ref{prop1}}

Putting \eqref{integral} into \eqref{I}, we have
\begin{equation} \label{I2}
I = \frac{1}{4 \pi^2} \int_{100 \Delta}^{200 \Delta} \Big| \int_{- \infty}^{\infty} B (\eta + i t) A(\eta + i t)  \Bigl( \frac{x}{c} \Bigr)^{\eta + i t} \frac{(e^{\delta (\eta + it)} - 1)^2}{(\eta + it)^2} dt \Big|^2 dc.
\end{equation}
Expanding \eqref{I2} out and integrating over $c$, we have
\begin{align*}
I \ll& x \int_{-\infty}^{\infty} \int_{-\infty}^{\infty} \Big| B (\eta + i t) B (\eta + i s) A(\eta + i t) A(\eta + i s) \Big| \\
&\times \min \Bigl( \delta^2, \frac{1}{t^2} \Bigr) \min \Bigl( \delta^2, \frac{1}{s^2} \Bigr) \frac{1}{1 + |t - s|} \, dt \; ds
\end{align*}
as
\[
\frac{(e^{\delta (\eta + it)} - 1)^2}{(\eta + it)^2} \ll \min \Bigl( \delta^2, \frac{1}{|t|^2} \Bigr) \; \; \text{ and } \; \; \int_{100 \Delta}^{200 \Delta} c^{-2 \eta + i (t - s)} dc \ll \frac{1}{1 + |t - s|}.
\]
By the inequality $2 a b \le a^2 + b^2$,
\[
|B (\eta + i t) B (\eta + i s) A(\eta + i t) A(\eta + i s)| \ll |(B \cdot A) (\eta + it)|^2 + |(B \cdot A) (\eta + i s)|^2.
\]
Hence, by symmetry,
\[
I \ll x \int_{-\infty}^{\infty} \int_{-\infty}^{\infty} \Big| B (\eta + i t) A(\eta + i t) \Big|^2  \min \Bigl( \delta^2, \frac{1}{t^2} \Bigr) \min \Bigl( \delta^2, \frac{1}{s^2} \Bigr) \frac{1}{1 + |t - s|} \, dt \; ds
\]
Next, we split the interval $(-\infty, \infty)$ into subintervals $\mathcal{I}_0 = \{ |t| \le 1/\delta \}$, and $\mathcal{I}_{j} = \{ 2^{j-1}/\delta \le |t| \le 2^j/\delta \}$ for $j \in \mathbb{N}$. By Lemma \ref{lem-logbd} and $\log x \ll_\epsilon x^{\epsilon/4}$ for $x \ge 1$,
\begin{align} \label{Ineq}
I \ll& \, x \sum_{j, j' \ge 0} \, \int_{\mathcal{I}_{j}} \int_{\mathcal{I}_{j'}} \Big| B (\eta + i t) A(\eta + i t) \Big|^2 \min \Bigl( \delta^2, \frac{1}{t^2} \Bigr) \min \Bigl( \delta^2, \frac{1}{s^2} \Bigr) \frac{1}{1 + |t - s|} dt \; ds \nonumber \\
\ll& \, x \sum_{j, j' \ge 0} \Bigl(\frac{\delta}{2^{j}}\Bigr)^2 \Bigl(\frac{\delta}{2^{j'}}\Bigr)^2 \log \frac{2^{\max(j,j')}}{\delta} \int_{\mathcal{I}_j} \Big| B (\eta + i t) A(\eta + i t) \Big|^2 dt \nonumber \\
\ll_\epsilon& \, x \delta^{2} \sum_{j \ge 0} \Bigl( \frac{\delta}{2^j} \Bigr)^{2 - \epsilon / 4} \int_{2^{j-1} / \delta}^{2^j / \delta} \Big| B (\eta + i t) A(\eta + i t) \Big|^2 dt
\end{align}
which gives the proposition.

%-------------------------------------------------------------------------------
\section{Proof of Theorem \ref{thm1}}

In this section, we specialize $B(s) = \zeta(s)$, the Riemman zeta function, and set $\Delta = x^{1/3}$, $U = 0.1 x^{1/3}$ and $L = 0.1 U$. Then $r_{\mathcal{B}} = 1$. Putting Lemma \ref{lem1} into \eqref{sizeC}, we have
\[
C \ll_{\epsilon, \mathcal{A}, \mathcal{B}} \frac{\Delta}{x^{1 - \epsilon} \delta^2} + \frac{\Delta U}{x^{1 - \epsilon} \delta^{3/2}} \ll \Delta^{1 - \epsilon / 3}
\]
when $\delta = x^{- 4/9 + \epsilon}$. Hence, in view of of \eqref{denseA}, we have some $c \in \mathcal{A}_2$ such that $\Phi(x / c) \neq 0$. This means that there is some integer of the form $m \cdot a$ with $a \in \mathcal{A}_1$ such that $\frac{x}{c} \le m a \le \frac{x}{c} e^{2 \delta}$. Therefore, $x \le m a c \le x e^{2 \delta} = x + O(x^{5/9 + \epsilon})$. Since $\epsilon > 0$ can be arbitrarily small, we have Theorem \ref{thm1}.

%------------------------------------------------------------------------------------
\section{Proof of Theorem \ref{thm2}}

In this section, we specialize $B(s) = \zeta(s)$ and set $\Delta = x^{1/4}$, $U = 0.1 x^{1/4}$ and $L = 0.1 U$. Let $\mathcal{A}_1$ and $\mathcal{A}_2$ be two ``almost dense" infinite sequences of positive integers. Define
\[
\mathcal{A} := \{ a \cdot b \, : \, a, b \in \mathcal{A}_1 \times \mathcal{A}_2 \}.
\]
One can check that the condition \eqref{denseA} is satisfied by looking at the subset $\{ a \cdot b \, : \, a, b \in \mathcal{A}_1 \times \mathcal{A}_2 \text{ and } \frac{b}{\sqrt{2}} \le a \le \sqrt{2} b \}$, applying the condition \eqref{denseA} for $\mathcal{A}_1$ and $\mathcal{A}_2$ with $\epsilon/3$ in place of $\epsilon$, and the fact that $d(n) \ll_\epsilon n^{\epsilon /12}$. Let
\[
a_n = \# \Bigl\{ (a, b) \in \mathcal{A}_1 \times \mathcal{A}_2 \, : \, a \cdot b = n \Bigr\}
\]
By Lemmas \ref{lem-bour} and \ref{lem-maj}, we have
\begin{align*}
S :=& \sum_{j \ge 0} \Bigl( \frac{\delta}{2^j} \Bigr)^{2 - \epsilon / 4} \int_{2^{j-1} / \delta}^{2^j / \delta} \Big| \zeta (\eta + i t) A(\eta + i t)^2 \Big|^2 dt \\
\ll_\epsilon& \sum_{j \ge 0} \Bigl( \frac{\delta}{2^j} \Bigr)^{2 - 13/42 - \epsilon / 3} \int_{2^{j-1} / \delta}^{2^j / \delta} \Big| A_0(\eta + i t) \Big|^4 dt
\end{align*}
where $A_0(s) = \sum_{1 \le n \le 2 \sqrt{U}} n^{-s}$. By Lemma \ref{lem-mv} with $D(s) = A_0^2(s)$,
\[
S \ll_\epsilon x^{\epsilon / 8} \sum_{j \ge 0} \Bigl( \frac{\delta}{2^j} \Bigr)^{2 - 13/42 - \epsilon / 3} \Bigl( \frac{2^j}{\delta} + U^2 \Bigr) \ll_\epsilon x^{\epsilon / 8} \delta^{29/42 - \epsilon / 3} +  x^{1/2 + \epsilon / 8} \delta^{71/42 - \epsilon / 3}.
\]
Putting this into \eqref{sizeC}, we get
\[
C \ll_{\epsilon, \mathcal{A}, \mathcal{B}} \frac{\Delta}{x^{1 - \epsilon} \delta^{97 / 42}} + \frac{\Delta}{x^{1/2 - \epsilon} \delta^{55 / 42}} \ll \Delta^{1 - \epsilon / 4}
\]
when $\delta = x^{- 21/55 + \epsilon}$. Hence, in view of of \eqref{denseA}, we have some $c \in \mathcal{A}_3$ such that $\Phi(\frac{x}{c}) \neq 0$. This means that there is some integer of the form $m \cdot a_1 \cdot a_2$ with $a_1 \in \mathcal{A}_1$, $a_2 \in \mathcal{A}_2$ such that $\frac{x}{c} \le m a_1 a_2 \le \frac{x}{c} e^{2 \delta}$. Therefore, $x \le m a_1 a_2 c \le x e^{2 \delta} = x + O(x^{34 / 55 + \epsilon})$. Since $\epsilon > 0$ can be arbitrarily small, we have Theorem \ref{thm2}.

%------------------------------------------------------------------------------------
\section{Proof of Theorems \ref{thm3} and \ref{thm4}}

With $U = x^{1/3}$ and $L = U/2$, we apply \eqref{integral} with $B(s) = \zeta(s)$ and $A(s) = N(s)^2$ where
\[
N(s) = \sum_{U - L \le n \le U + L} \frac{1}{n^s}.
\]
Similar to \cite{C}, we consider the following second moment
\begin{align*}
J :=& \int_{x}^{2x} \Big| \Phi(y) - y (e^\delta - 1)^2 N(1)^2 \Big|^2 dy \\
=& \frac{1}{4 \pi^2} \int_{x}^{2x} \Big| \int_{-\infty}^{\infty} \zeta( \eta + i t) N( \eta + it )^2 x^{\eta + it} \frac{(e^{\delta (\eta + i t)} - 1)^2}{(\eta + it)^2} dt \Big|^2 dy.
\end{align*}
Similar to the proof of Proposition \ref{prop1}, we expand things out, integrate over $y$ and apply symmetry to get
\begin{align*}
J &\ll \, x^2 \int_{-\infty}^{\infty} \int_{-\infty}^{\infty} | \zeta( \eta + i t ) |^2 | N( \eta + i t) |^4 \min \Bigl( \delta^2, \frac{1}{t^2} \Bigr) \Bigl( \delta^2, \frac{1}{s^2} \Bigr) \frac{1}{1 + |t - s|} dt \, ds \\
&\ll_\epsilon \, x^2 \delta^2 \sum_{j \ge 0} \Bigl(\frac{\delta}{2^j} \Bigr)^{2 - \epsilon / 4} \int_{2^{j-1} / \delta}^{2^j / \delta} | \zeta( \eta + i t ) |^2 | N( \eta + i t) |^4 dt.
\end{align*}
Applying Lemma \ref{lem-bour} and Lemma \ref{lem-mv} with $D(s) = N(s)^2$ to the above, we have
\begin{align} \label{J}
J &\ll \, x^2 \delta^2 \sum_{j \ge 0} \Bigl(\frac{\delta}{2^j} \Bigr)^{2 - 13/42 - \epsilon / 3} \int_{2^{j-1} / \delta}^{2^j / \delta} | N( \eta + i t) |^4 dt \\
&\ll \, x^{2 + \epsilon / 8} \delta^2 \sum_{j \ge 0} \Bigl( \frac{\delta}{2^j} \Bigr)^{2 - 13/42 - \epsilon / 3} \Bigl( \frac{2^j}{\delta} + U^2 \Bigr) \nonumber \\
&\ll_\epsilon \, x^{2 + \epsilon / 8} \delta^{113/42 - \epsilon / 3} +  x^{8/3 + \epsilon / 8} \delta^{155/42 - \epsilon / 3}. \nonumber
\end{align}
Now, let
\[
\mathcal{S} := \{ y \in [x, 2x] : \Phi(y) = 0 \}.
\]
Then
\[
| \mathcal{S} | \cdot x^2 \delta^4 \ll_\epsilon x^{2 + \epsilon / 8} \delta^{113/42 - \epsilon / 3} +  x^{8/3 + \epsilon / 8} \delta^{155/42 - \epsilon / 3}
\]
and
\[
| \mathcal{S} | \ll_\epsilon \frac{x^{\epsilon / 8}}{\delta^{55 / 42 + \epsilon / 3}} + \frac{x^{2/3 + \epsilon / 8}}{\delta^{13 / 42 + \epsilon / 3}} = o(x)
\]
when $\delta = x^{-42/55 + \epsilon / 2}$. Hence, $\Phi(y) \neq 0$ for almost all $y \in [x, 2x]$. This gives Theorem \ref{thm3} as $y \cdot e^{2 \delta} \le y (1 + 3 \delta) \le y + y^{13/55 + \epsilon}$ for $x$ sufficiently large.

\bigskip

If one assumes the Lindel\"{o}f hypothesis, we have $\zeta(\eta + i t) \ll_\epsilon t^{\epsilon / 3}$ instead of Bourgain's bound in Lemma \ref{lem-bour}. Hence, one can remove the $-13/42$ part of the exponent in \eqref{J} and obtain
\[
J \ll_\epsilon \frac{x^{2 + \epsilon / 8}}{\delta^{3 + \epsilon / 3}} + \frac{x^{8 / 3 + \epsilon / 8}}{\delta^{4 + \epsilon / 3}}
\]
instead. By a similar calculation as the unconditional situation, we have
\[
| \mathcal{S} | \ll_\epsilon \frac{x^{\epsilon / 8}}{\delta^{1 + \epsilon / 3}} + \frac{x^{2 / 3 +  \epsilon / 8}}{\delta^{\epsilon / 3}} = o(x)
\]
when $\delta = x^{-1 + \epsilon / 2}$. This yields Theorem \ref{thm4}.

\bigskip

{\bf Acknowledgment.} The author would like to thank Chi Hoi Yip for some stimulating discussion.
%------------------------------------------------------------------------------------
 
Department of Mathematics \\
Kennesaw State University \\
Marietta, GA 30060 \\
tchan4@kennesaw.edu

\end{document}